\def\Z{{\mathbb Z}}
\def\0{{\mathbf 0}}
\def\1{{\mathbf 1}}
\newtheorem{thm}{Theorem}
\newtheorem{rk}{Remark}
\begin{document}

\author{Vassily Olegovich Manturov\footnote{Moscow Institute of Physics and Technology,
Kazan Federal University, Northeastern University (China), {\bf {vomanturov @ yandex.ru}}}}

\title{A free-group valued invariant of free knots}

\maketitle

\begin{abstract}
{The aim of the present paper is to construct series of invariants
of free knots (flat virtual knots, virtual knots) valued in
free groups (and also free products of cyclic groups)

}
\end{abstract}

AMS MSC: 57M25, 57M27, 57M05, 57M07, 20F36

Keywords: Knot, Virtual Knot, Group, Parity, Word, Invariant, Picture,
Cylinder, Legendrian, Transverse, Word, Conjugacy.

\section{Introduction. Preliminaries}

Knots are encoded by {\em Gauss diagrams} modulo {\em Reidemeister moves}. For {\em classical knots} one imposes the
{\em planarity} restriction on the set of Gauss diagrams. Once we forget planarity, we get {\em virtual knots}. Gauss diagrams have
all chords endowed with two bits of information: {\em local writhe} number and the direction from one point (``over'') to the other
point (``under''). If we abandon these two bits of information, we get arbitrary Gauss diagrams modulo the three Reidemeister moves
as depicted in Fig. 1, top.

\begin{figure}
\centering\includegraphics[width=220pt]{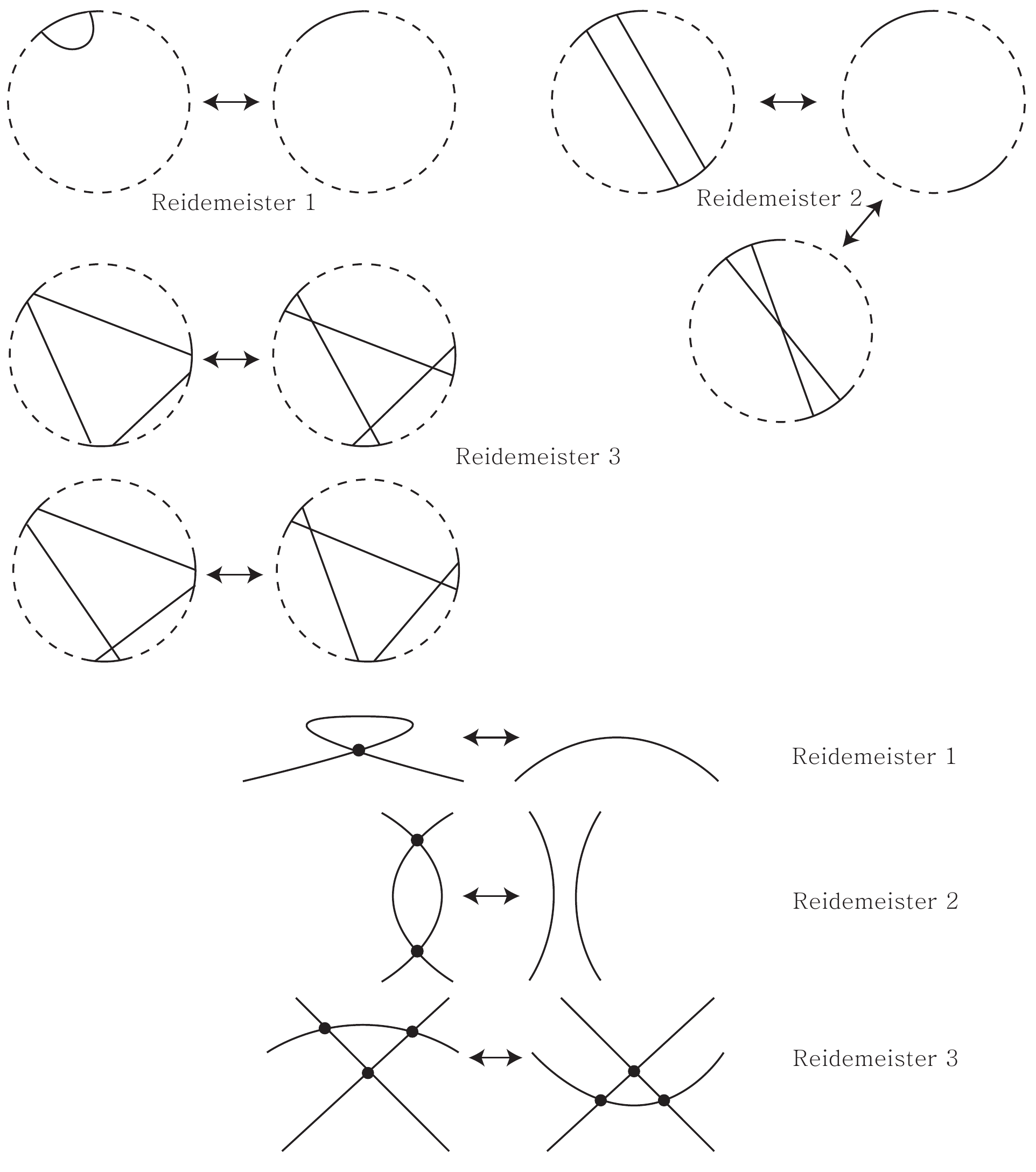}
\caption{Gauss diagrams and Reidemeister moves}
\label{Fig1}
\end{figure}

The other way to encode Gauss diagrams is to use $1$-component framed $4$-graphs. For more details, see \cite{Parity}.
In the bottom part of Fig. \ref{Fig1} we show the Reidemeister moves in the language of framed $4$-graphs.

One can also define {\em long free knots} by breaking a framed $4$-graph at a point and pulling the two ends to the infinity;
certainly, for long knots it is not allowed to perform moves {\em over the infinity}.

We shall often switch from free knots to long free knots; the main advantage of free knots is the presence of a {\em reference point};
one can naturally see that if we associate a {\em word} to a free knot, we can associate a {\em cyclic word} to a (compact free knot);
yet another advantage of free knots is the possibilities to fix some ``coordinate system'' which allows to ``justify'' the names of letters;
actually, for compact free knots the word associated to letters will be sometimes well defined only up to some remaining
letters, which is some outer automorphism of the group in question; for long free knots there is no such problem.

Here by {\em components} of a framed $4$-graph we mean {\em unicursal curves}.
The same can be used to define {\em free links}: we take similar (Gauss) chord diagrams with many core circles
and arbitrary framed $4$-graphs modulo Reidemeister moves.

Free knots (initially introduced by Turaev \cite{Turaev}) under the name of {\em homotopy classes of Gauss words}
were first conjectured to be trivial; it was disproved in \cite{Parity}, see also \cite{Gibson}.

The {\em parity theory} turns out to be a very powerful tool in the study of free knots and links, and in
\cite{Parity} (and later in \cite{IMN}, \cite{InvariantsAndPictures}) we demonstrated the following features of parity.

\begin{enumerate}

\item One can construct picture-valued invariants of {\em free} knots
(by {\em pictures} we mean linear combinations of free knot diagrams)

\item There is an invariant called {\em parity bracket}, $[\cdot]$, which for some free knot diagrams
$K$ gives us

$$[K]=K.$$

\item The above formula gives rise to the following principle: {\em if a diagram is complicated enough
then it realises itself}. The simplest variant of this principle is: If all crossings of the diagram $K$ are odd
and there is no way to apply the second decreasing move to $K$ then any diagram $K'$ equivalent to $K$
admits a smoothing which is identical to $K$.

\item We often see that {\em locally minimal} free knot diagrams (those which do not
admit decreasing Reidemeister moves) are {\em globally minimal} in a strong sense (e.g., are contained in any equivalent diagrams).

\item Such results can be partially extended to the realm of cobordisms: {\em if free knots can be capped by some ``folded'' disc
then they can be capped by a disc in a simple way}; for more details see \cite{ManturovFedoseev}.

\item Knots with parity behave like {\em links}: one can explicitly construct $2$-fold coverings over free/flat/virtual/pseudo/quasi
knots by links. \cite{IMN}.

\end{enumerate}

Actually, the parity appears whenever one has some additional ``$\Z_{2}$-homology'' condition: roughly speaking,
one has ``even'' and ``odd'' homology classes associated to crossings in a certain way.

The property ``if a diagram is complicated enough then it realises itself'' makes {\em free knots similar to  free groups}
(see \cite{InvariantsAndPictures}). Indeed, if we have a free group $F^{n}$ in generators, say, $a_{1},\cdots, a_{n}$,
then for any {\em reduced} word $W$ any word $W'$ equivalent to $W$ contains $W$ as a ``subword'', i.e., $W$ can be obtained
from $W'$ by iterative cancellation of some pairs of $a_{j}a_{j}^{-1}$ or $a_{j}^{-1}a_{j}$.

Hence, there is an obvious analogy between {\em free knots} and {\em free words}; here we have:

\begin{enumerate}

\item Reductions $\Longleftrightarrow$ second decreasing Reidemeister moves;

\item Reduced words $\Longleftrightarrow$ minimal representatives of diagrams.

\end{enumerate}

Certainly, the above correspondence is not $1\longleftrightarrow1$. We want to exploit this correspondence in both directions: we try
to study groups by using knots and knots by using groups.

Here we concentrate on the

\begin{enumerate}

\item Free knots $\to$ elements of a free group;

\item (Compact) free knots $\to$ conjugacy classes of a free group.

\end{enumerate}

The general plan to be realised for topological problems is as follows:
\begin{enumerate}

\item First, we look at some ``homological'' conditions;

\item These conditions allow one to get a certain ``labels'' $l_{1},l_{2},\cdots, l_{n}$;

\item Construct the free group $\mathbb{Z}_{l_1} * \cdots * \mathbb{Z}_{l_n}$ or the free product $\mathbb{Z}_{2}*\cdots *\mathbb{Z}_{n}$
of generators $a_{1},\cdots, a_{n}$ corresponding to $l_{1}\cdots l_{n}$.

\item Try to prove that when we apply equivalence (say, Reidemeister moves) to the initial topological
objects (say, knots) then different generators in the image {\em do not commute}.

\end{enumerate}

In fact, the last step is the most complicated one. Actually, it is not too difficult to grasp some
homological information for crossings so that the two crossings related by the second Reidemeister move
are cancelled as follows:

$$ a_{i} a_{i}^{-1}= 1.$$

Having some non-trivial homological information, one can avoid writing down generators corresponding to the first
Reidemeister moves: if there were such generators, say, $a_{j}$, they would give rise to the painful \footnote{Actually,
the relation $a_{j}^{2}$ is less painful, but sometimes we may count say, only left ends and not right chord ends} relation

$$a_{j}=1.$$

The hardest problem here is the existence of the third Reidemeister move. It gives rise to the some transformation of
a word in {\em three places}.

\begin{figure}
\centering\includegraphics[width=250pt]{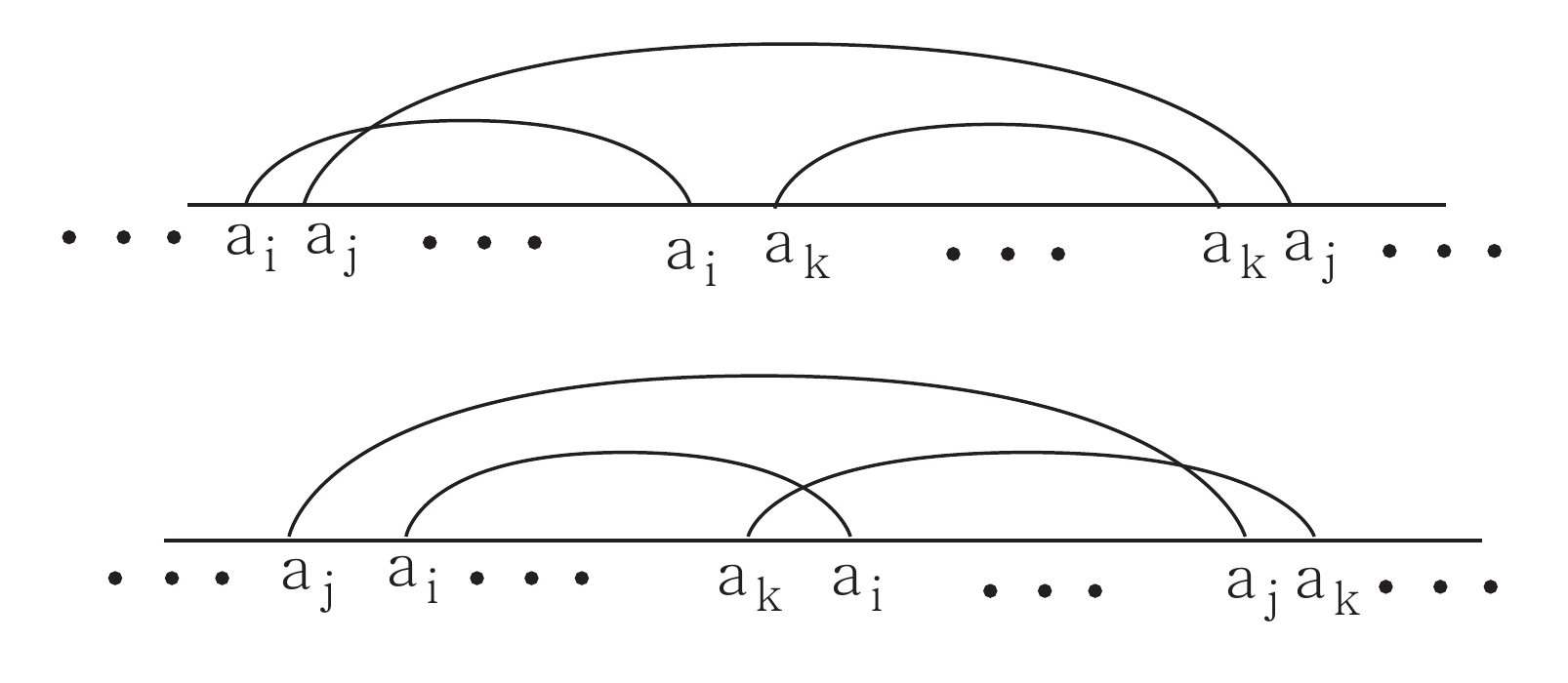}
\caption{Transformation of a word in three places}
\label{ThreePlaces}
\end{figure}

In each of these three places the word is transformed by the commutativity relation
$a_{j}a_{i}=a_{i}a_{j}$ which does not allow one to construct a genuine {\em free group invariant}.

In the present paper, we shall show the very first instance how this difficulty can be overcome
and how one can get

\begin{enumerate}

\item From homology to homotopy;

\item From free abelian groups to free groups.

\end{enumerate}

{\bf In the present paper, we shall construct various types of letters which are all due to the existence of just
one source of homological information, the Gaussian parity, which exists for free knots.}

This paper is one of the series of papers having the goal to study classical objects (knots, links,
their cobordisms) by using virtual knot methods. This is in some sense ``the last piece of the whole puzzle'':
we get invariants of free knots. The first pieces of the puzzle will start from classical knots and transform
them to ``virtual'' objects having non-trivial parity etc.

Here we note that knots in the cylinder already possess enough structure to use virtual knot theory methods;
on the other hand, cylinders naturally appear in the study of classical knots and links (For more details,
see \cite{InvariantsAndPictures}).

\subsection{The structure of the paper}

The paper is organised as follows. In Section 2, we construct (some old) the invariants of free knots valued in free groups and prove its
invariance.

Also, we informally discuss various ways of generalisation for this invariant.

In Section 3, we calculate the main example which shows that free knots can give rise to ``highly non-commutative''
words in the free group.

We conclude the paper by a discussion of further directions like cobordisms and Legendrian knots.

\subsection{Acknowledgements}

The author is grateful to Kim Seongjeong, Kim Sera and L.Kauffman and J.Wu for various fruitful discussions.

This work  was funded by the development program of the Regional Scientific and Educational Mathematical
Center of the Volga Federal District, agreement N 075-02-2020.

\section{The construction of the invariants. \\ The invariance proof}

In the present section, we describe the whole procedure how to construct the invariant of free knots valued in free groups.
To make the process clear and accessible for other types of parities (say, homological), we describe it step-by step.

\begin{enumerate}

\item The first step will consist in construction of some {\em integer-valued} invariant which can be reduced
to some {\em count}: we count the number of chords which are ``non-trivial'' by some reasons.

\item The second step will give rise to the invariant valued in the infinite
dihedral \footnote{there is a natural bijection between the elements of the group below and the elements of the infinite dihedral group} group which we write down as: $G=\langle a,b,b'| a^{2}=b^{2}=(b')^{2}=1, ab=b'a\rangle.$

\item The third step will give rise to an invariant valued in a group of exponential growth.
The key ingredient is the possibility to get rid of the relations $b^{2}=b'^{-2}$ by {\em renaming letters
and rewriting words.}
\footnote{Actually, this procedure can be accurately defined by using wreath products of groups with $\Z_{2}$ but
we don't want to give much detail in this short note.}


\end{enumerate}

\subsection{The first step}

Let $K$ be a Gauss diagram; we say that two chords $c,c'$ are {\em linked} if the endpoints of $c$
alternate with the endpoints of $c'$. Sometimes we shall write $\langle c,d\rangle = 1\in \Z_{2}$ if
$c,d$ are linked and $\langle c,d\rangle = 0 \in \Z_{2}$, otherwise.

A chord $c$ is {\em even} if the number of chords it is linked with, is {\em even}, and {\em odd}
if it is odd.

Now, the reader can immediately see that:
\begin{enumerate}
\item the chord does not change its parity when a Reidemeister-3 move is applied;
\item if two chords are cancelled by a Reidemeister-2 move then they have the same parity;
\item A chord taking part in the first Reidemeister move is even.
\item The number of odd chords of a Gauss diagram is even.
\end{enumerate}





Let $K$ be an oriented long knot diagram. We enumerate the endpoints as they appear according to the orientation;
we say that an odd chord is {\em of the first type} if it is linked with evenly many even chords
and {\em of the second type} otherwise.

Moreover, we say that an odd chord is {\em of the first sort} if both points are
in odd positions and it is linked with evenly many
even chords or if both endpoints of it are even and it is linked with oddly many even chords.

Otherwise we say that the odd chord is {\em of the second sort}.


Actually, one can make a distinction between various even chords by introducing the notion of {\em hierarchy} where odd chords are
of hierarchy $0$, even chords of the first type are of hierarchy $2$ and other even chords of type $2$ have
hierarchy $n$, where $n\in \{2,3,\cdots \}\cup \{\infty\}$.
\cite{Hierarchy}.

The results of the present paper can be definitely extended if we use more information about the hierarchy, but
here the type will be sufficient.

\begin{figure}
\centering\includegraphics[width=250pt]{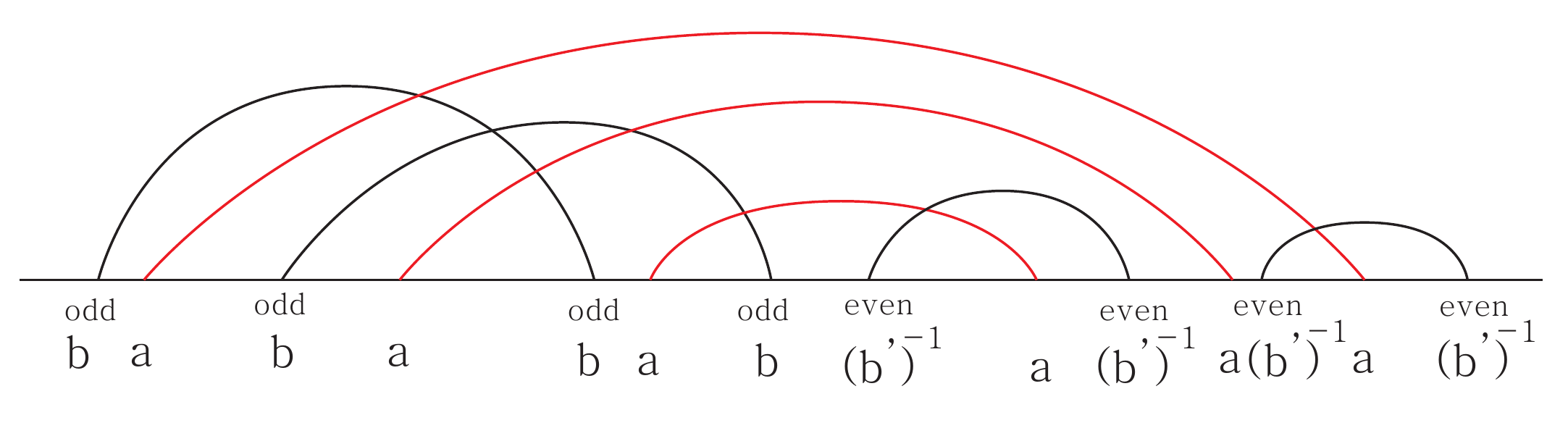}
\caption{The main example and the corresponding word in the group $G$}
\label{informative}
\end{figure}

Now, we define $l$ to be twice \footnote{We take twice the number of chords in
order to agree with \cite{Cobordisms}} the number of odd chords of the first sort minus twice the number of odd chords
of the second sort.

\begin{thm}
If $K_{1}$, $K_{2}$ represent the same long free knot then $l(K_{1})=l(K_{2})$. If two long free knots are obtained
from some free knot $K$ by breaking it at two different points then $l(K_{1})=\pm l(K_{2})$.
\end{thm}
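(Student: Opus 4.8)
The plan is to verify invariance by checking that the quantity $l(K)$ is unchanged (for long knots), or changed at most by a sign (for the compact case), under each of the three Reidemeister moves, using the four elementary parity observations already listed in the excerpt. First I would set up the bookkeeping: with the endpoints enumerated along the orientation, each odd chord carries two pieces of discrete data — the parities (even/odd) of the positions of its two endpoints, and the parity of the number of even chords it is linked with. The ``first sort'' condition is a Boolean combination of these, so $l(K)$ is a signed count over odd chords, and to prove $l(K_1)=l(K_2)$ it suffices to exhibit, for each move, a parity-preserving bijection between the odd chords of $K_1$ and those of $K_2$ that also preserves the sort.

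For the third Reidemeister move I would argue that it involves three chords, each of which keeps its own parity (observation 1), so odd chords stay odd and even chords stay even; moreover the move is supported in a small disc and the cyclic/linear order of \emph{all} endpoints outside that disc is untouched, while inside the disc the three strands merely exchange their relative order. The key computation is that the position-parities of endpoints and the linking counts with even chords are preserved: an endpoint that moves past another endpoint in the reordering shifts its position index by $\pm 1$ for \emph{each} such crossing event, and the R3 configuration moves every affected endpoint past an even number of others (the three strands each cross the same pair), so all position parities survive; likewise the linking number of any chord with the even chords of the triple changes by an even amount. Hence every odd chord keeps its type and its sort, and $l$ is preserved. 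For the first Reidemeister move, the new chord is even (observation 3) and is linked with no other chord, so adding or removing it changes none of the data of the other chords and does not contribute to $l$; here one must also check that the insertion of its two adjacent endpoints shifts the indices of the rest of the diagram by $2$, preserving all position parities. For the second Reidemeister move, the two cancelled chords $c,c'$ have the same parity (observation 2); if they are even they are irrelevant to $l$ up to the index-shift argument again (their removal shifts other indices by an even number and they contribute evenly to everyone's linking counts since $c,c'$ are linked with exactly the same chords), and if they are odd one checks they are of opposite sort, so their signed contributions $+1$ and $-1$ to $l$ cancel, while their removal is again a parity-neutral operation on the remaining chords.

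I expect the main obstacle to be precisely this last point: showing that an R2 pair of \emph{odd} chords always consists of one chord of the first sort and one of the second sort, and separately that an R2 pair of even chords, together with their two pairs of adjacent endpoints, perturbs the position-parities and even-linking-counts of all surviving odd chords by an even amount. Both reduce to a careful local analysis of the Gauss diagram near the two parallel strands: the two chords $c,c'$ span nested or adjacent arcs, they are linked with exactly the same set of other chords, and their four endpoints occupy consecutive-in-pairs positions, so the parity effect on any third chord is a sum of an even number of $\pm 1$'s. The sort-flip for the odd case then follows from the explicit definition of ``first sort'' by tracking how the two endpoint-position parities and the even-linking parity differ between $c$ and $c'$.

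Finally, for the compact statement, if $K_1$ and $K_2$ come from cutting a single free knot $K$ at two different points, they differ by a cyclic rotation of the sequence of endpoints together with a possible global reversal of the enumeration direction. A cyclic rotation past a single endpoint changes the position index of every \emph{other} endpoint by $\pm 1$; doing this past one full chord-pair's worth of endpoints is parity-neutral, but rotating past a single endpoint flips the position-parities of all endpoints simultaneously, which can globally swap ``first sort'' with ``second sort'' — this is exactly what produces the sign. Reversing the orientation likewise reverses all position indices, again a global parity operation. Since every odd chord either keeps its sort or every odd chord flips its sort, the signed count $l$ is multiplied by $\pm 1$, giving $l(K_1)=\pm l(K_2)$.
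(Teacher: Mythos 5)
Your overall strategy --- a move-by-move check of R1, R2, R3 plus a separate analysis of the basepoint change for the compact case --- is exactly the approach the paper takes (its own proof is a one-line deferral to the Reidemeister check in the cobordism paper), and your treatment of R1, of R2 (including the key observation that an odd R2 pair consists of one chord of each sort, and that the removed pair is linked with the same set of other chords so all surviving data changes by even amounts), and of the cyclic shift of the basepoint is correct.

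However, your R3 step contains two false intermediate claims, and as written that step would fail. Write $\pi(c)\in\Z_2$ for the common position parity of the two endpoints of an odd chord $c$ and $e(c)\in\Z_2$ for the parity of the number of even chords linked with $c$; then $c$ is of the first sort iff $\pi(c)+e(c)=1$. In the R3 move each of the three arcs carries exactly \emph{two} endpoints and the move transposes them, so every one of the six affected endpoints moves past exactly \emph{one} other endpoint: all six position parities flip, and $\pi(c)$ flips for each chord $c$ of the triple --- contrary to your claim that ``all position parities survive.'' Likewise, the transpositions toggle all three pairwise linkings inside the triple; since the number of odd chords in an R3 triple is even, the only case with odd chords is two odd chords and one even chord, and then each odd chord of the triple has its linking with the unique even chord of the triple toggled, so $e(c)$ changes by an \emph{odd} amount --- contrary to your claim that it changes by an even amount. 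The conclusion you want is still true, but only because these two errors cancel: $\pi(c)$ and $e(c)$ both flip, so $\pi(c)+e(c)$ is preserved and the sort survives. You need to replace the assertion that $\pi$ and $e$ are separately invariant under R3 by the correct statement that each flips and that only their sum modulo $2$ (which is what the sort depends on) is invariant.
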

The proof follows from the step-by-step check of the Reidemeister moves. For more details, see \cite{Cobordisms}.

Note that the here $l$ is nothing but just bare count (if we deal with compact knots, not long ones, we get just
relative signs).

\subsection{The second step}

Now, we undertake an attempt to encode the above information about the {\em bare count} of odd chords by means
of a (non-abelian) group.
Let $K$ be a {\em long} Gauss diagram. We create the word $\phi(K)$ by using the following rule:
we follow the orientation of $K$ and whenever we meet an even chord, we write $a$, when we have an
odd chord,
we write $b$ for ends of chords  of the first type or $b'$ for ends of chords of the second type
see Fig.\ref{figurebelow} below.

\begin{figure}
\centering\includegraphics[width=300pt]{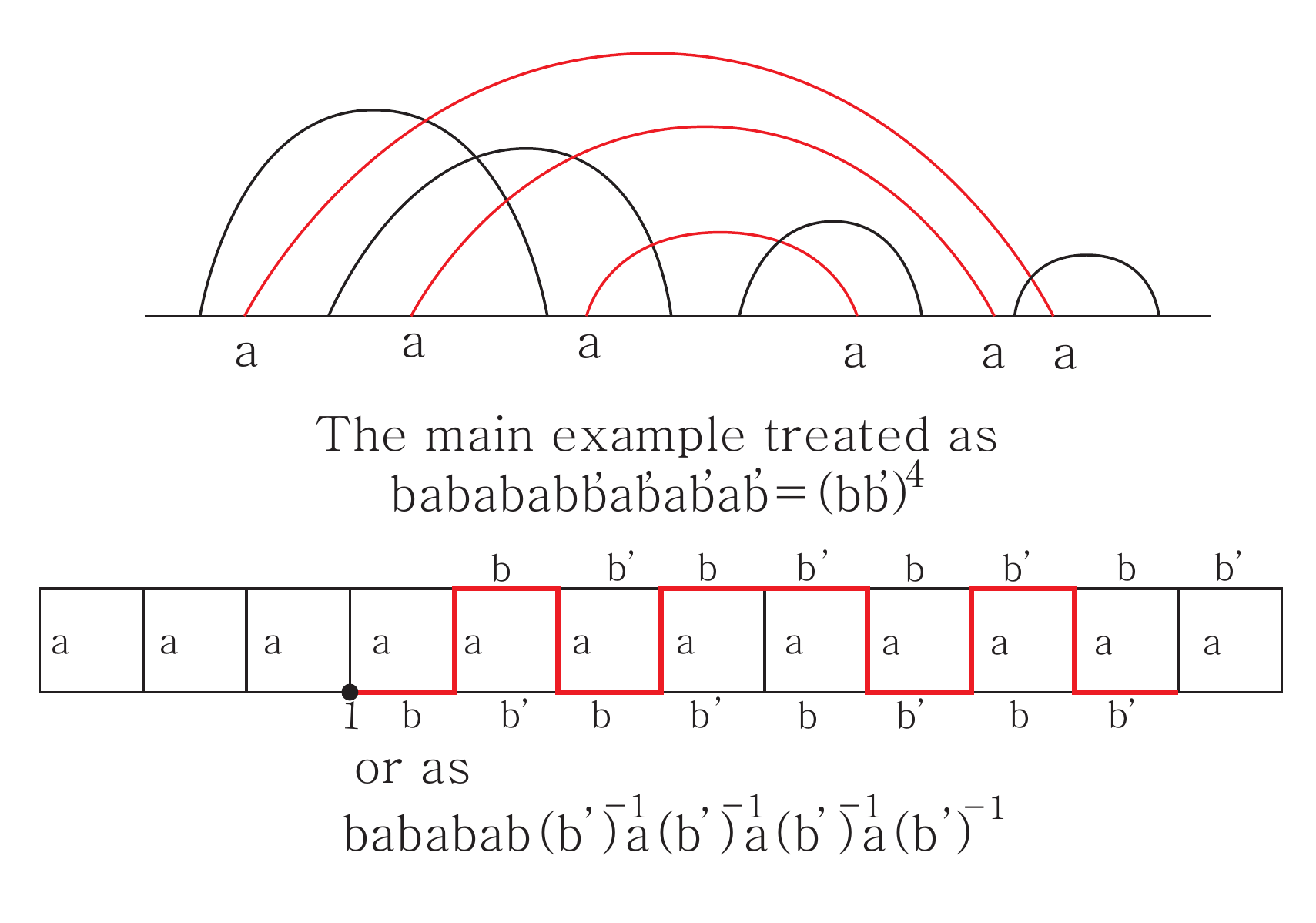}
\caption{A long free knot $K$ and the word $bababab b'ab'ab'ab'$}
\label{figurebelow}
\end{figure}

\begin{thm}\label{secondstep}
If $K_{1},K_{2}$ are equivalent as long free knots then $\phi(K_{1})=\phi(K_{2})$ in $G$.
\end{thm}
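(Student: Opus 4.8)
The plan is to verify invariance of $\phi(K)\in G$ under each of the three Reidemeister moves performed on a long Gauss diagram, using the algebraic identities $a^2=b^2=(b')^2=1$ and $ab=b'a$ that define $G$. Since $\phi$ is read off by traversing the long diagram and recording one letter per chord-endpoint, the key observation is that a Reidemeister move alters the word only in a bounded region (empty for the $\Omega_1$ region where the new chord is removed, two adjacent positions for $\Omega_2$, and the three ``places'' of Fig.~\ref{ThreePlaces} for $\Omega_3$), and everywhere else the letters are literally unchanged \emph{provided the type of each surviving chord is unchanged}. So the proof reduces to two things: (i) showing the global type (even/odd, first/second type) of every chord not directly involved in the move is preserved, which follows from the elementary facts already recorded after the first-step definitions (parity is $\Omega_3$-invariant, the $\Omega_1$ chord is even, the two $\Omega_2$ chords share a parity), together with a check that the count of even chords linked to a given chord changes by an even amount under each move; and (ii) checking the local word change is trivial in $G$.

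First I would handle $\Omega_1$: the move inserts or deletes a small isolated chord $c$ with adjacent endpoints, and $c$ is even, so locally the word changes by inserting/deleting $aa$; since $a^2=1$ in $G$, $\phi$ is unchanged — but I must also confirm no other chord changes type, which is clear since linking numbers with $c$ are all $0$ and adding an even chord changes ``number of even chords linked to $d$'' by $0$ for every other chord $d$. Next $\Omega_2$: two chords $c_1,c_2$ are created/removed; they have equal parity, and they are linked only to each other among the newly created data, so if they are even the local change is $a^{\pm1}a^{\mp1}$ (inserted as an adjacent pair in each of the two arcs), and if odd the change is $b b$ or $b' b'$ (with matching type, since $c_1,c_2$ have the same type — this needs a short argument that being ``of the first/second type'' is shared by the two $\Omega_2$ chords). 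In $G$ each of $aa$, $bb$, $b'b'$ is trivial, so $\phi$ is preserved; again I check that adding two chords that are linked to each other but to nothing else changes every other chord's even-linking-count by $0$, hence preserves all types.

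The main obstacle, as the introduction itself emphasises, is $\Omega_3$. Here three chords pairwise swap the cyclic order of their endpoints, and the word is modified in the three places of Fig.~\ref{ThreePlaces}. I would enumerate the possible parity patterns of the three chords involved (parities are preserved by the move, so each chord keeps its label-class, but a chord can switch between types because its count of linked even chords can change by $\pm1$ when it is one of the three participants). The heart of the argument is: in each of the three places the move transposes two adjacent endpoints, so the local word changes by swapping two adjacent letters $xy\mapsto yx$; one must show that, \emph{summed over the constraints imposed by the move}, these three transpositions compose to a relation that holds in $G$. Concretely, a transposition of two even endpoints is $aa\mapsto aa$ (trivial); a transposition of an even and an odd-first-type endpoint is $ab\mapsto ba$ or $ba\mapsto ab$, and here one uses $ab=b'a$ — but this forces the odd chord to \emph{change type} from first to second (its even-linking parity flips), so the letter $b$ on its \emph{other} endpoint must simultaneously become $b'$; the bookkeeping is that exactly two of the three chords have their even-linking-parity flipped by the move (the third is linked to both others or to neither), so exactly two type-changes occur, and these are precisely compensated by the $ab=b'a$ substitutions arising from the relevant transpositions. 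I would organise this as a finite case check over which of the three chords are odd and how they link the unique even one, in each case writing the ``before'' word and ``after'' word explicitly in the three places and reducing their quotient to the identity using only $a^2=b^2=b'^2=1$ and $ab=b'a$ (equivalently $ab'=ba$, $b'ab=a$, etc.). The delicate point to get right is the interaction between the \emph{position-based} distinctions (type depends on parity of positions and of linked even chords, which is global information) and the purely \emph{local} letter swaps; I expect that once the claim ``$\Omega_3$ flips the even-linking-parity of exactly two of its three chords, and these two are exactly the ones whose adjacent-swap in Fig.~\ref{ThreePlaces} involves the even chord'' is nailed down, the algebra in $G$ closes up cleanly, and the proof is complete.
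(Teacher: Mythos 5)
Your strategy coincides with the paper's own (the paper's proof is a one-sentence instruction to check the three moves against the relations of $G$), and your treatment of the first and second moves, including the verification that chords not taking part in a move keep their parity and type, is correct. The problem is in the third Reidemeister move, and it is a genuine gap rather than a presentational one. If you actually carry out the enumeration over parity patterns of the three participating chords that you propose, you will hit patterns that do \emph{not} close up in $G$: if exactly one of the three chords is odd, its type does not flip (its linking changes with two \emph{even} chords, so its count of linked even chords changes by an even amount), and each place it shares with an even chord undergoes the bare transposition $ab\mapsto ba$; if all three chords are odd, no types flip, and a place shared by two odd chords of different types undergoes $bb'\mapsto b'b$. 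Neither $ab=ba$ nor $bb'=b'b$ holds in $G$: substituting $b'=aba$ identifies $G$ with the infinite dihedral group $\langle a,b\mid a^2=b^2=1\rangle$, in which $ab$ has infinite order. So the proof cannot be completed without the lemma that excludes these patterns: every chord \emph{outside} the triple is linked with an even number of the three chords of the move (its two endpoints split the three ``places'' of Fig.~\ref{ThreePlaces} into two groups, and each of the three chords joins two distinct places, so the linked ones come in pairs); hence the sum of the Gaussian parities of the three chords is even and the number of odd chords among them is $0$ or $2$. This is the missing idea, and nothing in your proposal supplies it.

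Two further points in your $\Omega_3$ bookkeeping are off. First, your parenthetical reason for ``exactly two type-changes occur'' --- ``the third is linked to both others or to neither'' --- is not the operative mechanism: whether an odd chord's type flips is governed by how many of the \emph{other two} chords are even (each such even chord contributes $\pm1$ to its even-linking count), not by its linking pattern; and your later phrase ``the unique even one'' silently presupposes the $2$-odd-$1$-even configuration that the missing lemma is needed to justify. Second, in that configuration each odd chord changes its letter at \emph{both} endpoints, and only one of those endpoints lies in a place containing the even chord; the place shared by the two odd chords transforms as $xy\mapsto \bar{y}\,\bar{x}$ with $x,y\in\{b,b'\}$, which is either literally the identity substitution or $bb\mapsto b'b'$ (both sides trivial since $b^2=(b')^2=1$). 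This does work out, but it is a separate check, not something ``compensated by $ab=b'a$'' as you assert.
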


\begin{proof}
One just has to check that the first Reidemeister move gives: $aa = 1$, the second Reidemeister
move gives $aa = 1$ or $bb=1$ or $b'b'=1$; finally, the third Reidemeister move gives rise to three
substitutions each of which is either identical or $ab=b'a$ or $ab'= ba$.
\end{proof}

Hence, we see that the information from a group obtained here is not 
stronger than just the invariant $l$.

Indeed, when we look at the Cayley graph of the group $\Z$, one can see that the group has a natural bijection (which is an isomorphism on finite index subgroups) with the infinite dihedral group. However, the {\em number} $l$ now turns into a {\em geometrical}
interpretation as the {\em distance in the Cayley graph}. \footnote{Indeed, look at the bottom part of Fig. 4;
Having a word, we can consider the odd letters $b$ and $b'$ as steps to the right/ to the left; the letter a moves a point from the bottom line to the top line.
Hence, any word in $a,b,b'$ with evenly many instances of a gives rise to some element $(bb')^{k} or (b'b)^{k}$ which in turn can be considered as a horisontal shift by $\pm 2k.$}


\subsection{The third step}

Let us write more carefully at the map constructed on the second step. Can it be enhanced? In fact, we are
very much restricted in our attempt to write down all maps in terms of {\em groups} and {\em group homomorphisms}.

The letters b and b' are treated in two different ways: one way is to count them by $\pm 1$ according to the sort; the other way is to keep track of the existence of $'$ to have a word in $G$.

We can reconciliate these two approaches as follows.

Actually, we may treat $b$ in different positions differently; to describe this completely from the algebraic point of
view, one should use lots of wreath products and more elaborate machinery; we want to omit it for the first short paper.

However, one can consider the following group

$$G' =\langle a, b, b' | a^{2}=1, a b = (b')^{-1} a.\rangle $$

Note that unlike $G$, the group $G'$ has {\em exponential growth,} and $b,b'$ do not commute.

Now, the invariant
$$\psi(K)$$ is obtained from  $\phi(K)$ by using the rule:

{\em replace all letters $b'$ in even positions with $(b')^{-1}$ and replace $b$ in even
positions with $b^{-1}$.}

\begin{rk}
Actually, we can go even a bit further and get rid of the relation $a^{2}=1$ if we pass from {\em any } even chords to
{\em special} even chords, but for our purposes (exponential growth) it suffices just to work with $G'$.
\end{rk}

\begin{thm}
If two long knots $K_{1}$ and $K_{2}$ are equivalent then $\psi(K_{1})=\psi(K_{2})$ as elements of the group
$G'$.
\end{thm}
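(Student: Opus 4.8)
The plan is to mimic the proof of Theorem \ref{secondstep}, checking the three Reidemeister moves, but now tracking the extra bookkeeping introduced by the parity of positions. First I would recall that $\phi(K)$ is already a well-defined element of $G$, so the content is entirely about the substitution $b\mapsto b^{-1}$, $b'\mapsto (b')^{-1}$ at even positions (and leaving $a$, and odd-position $b,b'$, untouched) producing a well-defined element of $G'$. The key preliminary observation is that the relation of $G'$, namely $ab=(b')^{-1}a$, can be read in two equivalent ways, $ab'=b^{-1}a$ as well (conjugating by $a$, using $a^2=1$), so that passing an $a$ past a $b$ or $b'$ both flips the sign of the exponent and swaps the letter between the $b$ and $b'$ families --- exactly the behaviour one wants once we interpret $a$ as ``the parity of the position changes here''.

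Next I would organise the check move by move. For the first Reidemeister move the new pair of letters is $aa$ at two adjacent positions of the same parity-shifting behaviour: the chord is even, contributing $aa=1$, and removing it does not change the parities of positions of the other chord-ends in a way that affects their letters, since we delete two adjacent endpoints. For the second Reidemeister move two chords of equal parity are created/destroyed; if they are even we get $aa=1$, and deleting two adjacent $a$-endpoints again preserves all other positional parities. If the cancelled chords are odd (of the same type, hence the same letter $b$ or $b'$), their four endpoints come in two adjacent pairs along the long knot; in each pair the two endpoints sit at positions of opposite parity, so under $\psi$ one becomes $b$ and the other $b^{-1}$ (respectively $b'$ and $(b')^{-1}$), giving $bb^{-1}=1$ or $b'(b')^{-1}=1$ in $G'$; and after removing the two adjacent pairs the positional parities of all remaining endpoints are unchanged. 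So $\psi$ is unchanged by both decreasing moves.

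The hard part will be the third Reidemeister move, which is where the whole construction earns its keep. Here no chord is created or destroyed, and no global reparity occurs, but the cyclic/linear order of three pairs of endpoints is permuted locally, so in three places the word changes. In $G$ these three changes were $ab=b'a$ or $ab'=ba$ or the identity; I would verify that under the position-parity substitution each of these three local changes becomes precisely a relation in $G'$: an $a$-endpoint and an adjacent $b$- (or $b'$-) endpoint swap order, which changes the position-parity of the $b$-endpoint (it moves across the $a$) and does not change the parity of the $a$-endpoint, so $\psi$ records $a b^{\pm1} \leftrightarrow (b')^{\mp 1} a$ --- which is exactly $ab=(b')^{-1}a$ (and its $a$-conjugate $ab'=b^{-1}a$) in $G'$. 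The genuinely delicate points to nail down are: (i) that the \emph{type} (first vs.\ second) of the odd chords involved is preserved by R3 --- this is guaranteed because R3 changes no linking numbers and no chord's parity, as recorded in the first step; (ii) that the \emph{sort} (equivalently the position-parities) transforms in lockstep with the type so that the letter and its exponent always match up as the relation of $G'$ demands; and (iii) that in the two ``non-trivial'' of the three places the roles are consistent (one place an $a$ crosses a $b$, another an $a$ crosses a $b'$, the third is trivial). Once these are checked against Fig.~\ref{ThreePlaces} and the conventions of the second step, invariance under all generating moves follows, and hence $\psi(K_1)=\psi(K_2)$ in $G'$ whenever $K_1$ and $K_2$ are equivalent long free knots.
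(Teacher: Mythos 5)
Your proposal follows the same route as the paper, which disposes of this theorem in one line by saying it is a direct check of the Reidemeister moves and a minor modification of Theorem \ref{secondstep}; you actually carry out that check, correctly identifying that the position-parity flip caused by an endpoint crossing an $a$ is exactly what turns the $G$-relation $ab=b'a$ into the $G'$-relation $ab=(b')^{-1}a$ (and its $a$-conjugate $ab'=b^{-1}a$). Your version is more detailed than the paper's and honestly flags the remaining delicate points (preservation of type under R3 and consistency of the three local substitutions), so it is a faithful, expanded rendering of the intended argument rather than a different one.
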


\begin{proof}
The proof follows from the direct check of the Reidemeister moves (actually, this
is a minor modification of Theorem \ref{secondstep}).
\end{proof}

Looking at Fig. \ref{figurebelow}, we readily see that for the knot $K$ given there we have

$$\psi(K) = b a b a b a b  (b')^{-1} a (b')^{-1} a (b')^{-1}a (b')^{-1} $$


\section{An example}

Let us consider the free knot $K$ given in Fig. \ref{figurebelow}.

The invariant $l=\pm 8$ says that it is not trivial and it is {\em not slice} because if has four chords
of the first sort (if we look at the compact knot, we say that we have four chords of the same sort), so
they can not be cancelled by means of Reidemeister moves and can not be spanned by a disc. On the Cayley graph
these eight chord ends sum up and give $8$; this number $8$ is invariant \footnote{There are various
cobordism obstructions which originate from the count of some ``odd things'' with writhe numbers
serving signs \cite{KauffmanParity}, however, here odd chords themselves have other signs coming from other reasonings;
the reader can readily expand these invariant for the case of virtual knots.
}.

The group $G$ actually gives the same information: though it is not abelian, it has an abelian group of {\em finite index},
hence the only information one can get in this group is just $(bb')^{k}$ (in our case $k=4$).

From the construction of the group $G'$ we immediately get the following theorem.

\begin{thm}
Let $K_{1},K_{2}$ be two long free knots. Then for the connected sum $\psi (K_{1}{\#} K_{2})= \psi(K_{1}) \cdot \lambda \psi(K_{2})$,
where $\lambda$ is some automorphism of the group $G'$.
\end{thm}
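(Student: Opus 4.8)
The plan is to trace what the operation of connected sum does to the word $\phi$ first, and then carry this through the ``rewriting'' that produces $\psi$. Recall that a long free knot diagram is obtained by breaking a framed $4$-graph at a point; a connected sum $K_1\# K_2$ is formed by concatenating the two long diagrams, placing $K_2$ after $K_1$ along the orientation, so that no chord of $K_1$ is linked with any chord of $K_2$. Consequently, for any chord $c$ of $K_1$ its linking number with other chords, and in particular its parity, its type and its sort, are unchanged by the connected sum; the same holds for chords of $K_2$. Therefore, as far as the bare assignment of letters $a,b,b'$ is concerned, $\phi(K_1\# K_2)=\phi(K_1)\cdot\phi(K_2)$ in the free monoid on $\{a,b,b'\}$, hence also in $G$.

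The subtlety, and the main point of the theorem, is that $\psi$ is not a homomorphism from the free monoid: it depends on the \emph{positions} of the letters, not only on the letters themselves. The rule defining $\psi$ inverts $b$ (resp.\ $b'$) exactly when it sits in an even position counted from the starting point of the long diagram. In $K_1\# K_2$ the endpoints of $K_1$ occupy the initial segment of positions, and their parities are exactly as in $K_1$; so $\psi(K_1)$ appears verbatim as the initial subword of $\psi(K_1\# K_2)$. The endpoints of $K_2$, however, are shifted by the total number $N$ of chord endpoints of $K_1$, which equals twice the number of chords of $K_1$ and is therefore \emph{even}. Since shifting all positions by an even number does not change any parity, the trailing subword of $\psi(K_1\# K_2)$ coming from $K_2$ is again exactly $\psi(K_2)$, so in fact $\psi(K_1\# K_2)=\psi(K_1)\cdot\psi(K_2)$ and one may take $\lambda=\mathrm{id}$.

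The remaining point to be careful about — and the place where the automorphism $\lambda$ genuinely enters — is that a long free knot is only given up to equivalence, and the ``position parity'' reference frame that $\psi$ uses is pinned to the basepoint; Theorem~\ref{secondstep} and the $\psi$-invariance theorem guarantee $\psi(K)$ is well defined in $G'$, but one must check that the concatenation used above respects the chosen normal forms. If one instead builds $K_1\# K_2$ by inserting $K_2$ at a generic point of $K_1$ rather than at the very end, the endpoints of $K_1$ lying beyond the insertion point get their positions shifted by the (even) number $N_2$ of endpoints of $K_2$, which again preserves their parities, so $\psi(K_1)$ is still reproduced; what can change is a global relabelling of the generators of $K_2$'s block relative to the ambient frame, i.e.\ precomposition with an automorphism $\lambda$ of $G'$ permuting/inverting $a,b,b'$ in the allowed way. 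So the clean statement is $\psi(K_1\# K_2)=\psi(K_1)\cdot\lambda\,\psi(K_2)$, and the main obstacle in writing the proof carefully is exactly the bookkeeping of how the two reference frames are glued — everything else is the position-parity computation together with the evenness of the number of chord endpoints, which is item~4 in the list following the first step.
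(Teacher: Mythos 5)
Your argument is correct and is precisely the computation the paper leaves implicit (the paper offers no proof beyond ``From the construction of the group $G'$ we immediately get the following theorem''): end-to-end concatenation of long Gauss diagrams concatenates the words $\phi$ because no chord of $K_1$ links a chord of $K_2$, so parities, types and sorts are preserved, and since $\phi(K_1)$ has even length $2n_1$ the position parities defining $\psi$ are also preserved, giving the sharper conclusion that $\lambda$ may be taken to be the identity. Two small slips worth fixing: the evenness you need is simply that $K_1$ has $2n_1$ chord endpoints (item~4 of the list, which you cite, instead asserts that the number of \emph{odd chords} is even), and your closing speculation about inserting $K_2$ at an interior point of $K_1$ would produce a word of the form $u\,\lambda(\psi(K_2))\,v$ with $\phi(K_1)=uv$ rather than $\psi(K_1)\cdot\lambda\,\psi(K_2)$, so it does not explain the $\lambda$ in the statement and is best omitted.
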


Here one can naturally see that the invariant $\psi$ can easily detect

\begin{enumerate}

\item Free knot invertibility;

\item Any sorts of free knot mutations.

\end{enumerate}

Of course, having this done for free knots, we have lots of consequences for virtual knots.
Certainly, the next goal will be to do the same for classical knots and for knots in $3$-manifolds.

\section{Further directions}

Here we sketch some directions of further investigation we are undertaking now. In each of the problems we
highlight the main problem which seems to be tractable by using the approach given in the present paper.

\begin{enumerate}

\item First, it is rather obvious that free knots {\em almost never commute}. Indeed, the group $G'$ is non-abelian;
to show that for long knots $K_{1}, K_{2}$ we have $K_{1}{\#} K_{2}\neq K_{2}{\#} K_{1}$ we have to
look carefully at the group $G'$ constructed above.

\item Certainly, the groups constructed here can be enhanced by large if we use not only parity but also parity
hierarchy; one can construct invariants of free knots valued in free products of groups whose generators are ``responsible for''
the existence of certain ``sub-knot structures''.

\item The invariant $l$ (counted with $\pm$ sign) is certainly a sliceness obstruction for free knots but
it can't say anything about the slice genus of free knots. The problem is that having a non-zero value of $l$
(say, $l=8$) and applying cut-and-paste techniques, we can ``roughly'' split $8$ into the sum $4+4$ and then
paste $4$ with $-4$ and get $0$.

Having groups which are close to free groups one can use commutator length techniques and estimate from below
the number of operations one needs to cut the word into, in order to compose a trivial element.

\item The approach given here gives a lot for virtual objects (upgrades from free knots to virtual knots
can be done straightforwardly).

We expect to have invariants for classical knots. This is to be done in a sequel of the present paper.

The key idea is to concentrate on knots inside the cylinder; this will give rise to {\em parities, indices
and other non-trivial structures}. As a result, we expect maps (not necessarily homomorphic) from classical knots
to free groups.

\end{enumerate}


\begin{thebibliography}{100}


\bibitem{ManturovFedoseev} D.A.Fedoseev, V.O. Manturov, A sliceness criterion for odd free knots, {\em Sb. Math.}, 210:10 (2019), 1493–1509.


\bibitem{Gibson} A.Gibson, Homotopy invariants of Gauss words, {\em  Mathematische Annalen}, ISSN 0025-5831, Vol. 349, No. 4, 2011.

\bibitem{IMN} D.P.Ilyutko, V.O.Manturov, I.M.Nikonov, {\em Parity and Patterns in Low-Dimensional Topology},
{\em Cambridge Science Publishers}, 2015.

\bibitem{KauffmanParity} L.H.Kauffman, A self-linking invariant
of virtual knots, {\em Fund. Math.},{\bf 184}, 2004, P. 135-158.

\bibitem{Parity} V.O.Manturov, Parity in Knot Theory, {\em Sb. Math.}, 201:5 (2010), 693–733

\bibitem{Cobordisms} V.O.Manturov, Parity and cobordism of free knots,{\em  Sb. Math.}, 203:2 (2012), 196–223.


\bibitem{InvariantsAndPictures} V.O.Manturov, D.A.Fedoseev, S.Kim, I.M.Nikonov, {\em Invariants and Pictures}, {\em World Scientific}, 2020.


\bibitem{NewGradings} V.O.Manturov, Additional gradings in the Khovanov homology for thickened surfaces,
{\em Doklady Mathematics,} 2008. Vol. 77. No. 3. P. 368-370.

\bibitem{Turaev} V.G.Turaev, Topology of words , {\em Proc. Lond. Math. Soc}. (3), 95:2, (2007), 360–412.




\bibitem{VO OV} O.V.Manturov, V.O.Manturov, Free knots and Groups, {\em Journal of Knot Theory and Its Ramifications}. 2010. Vol. 19. No. 2. P. 181-186.

\bibitem{Gnk} V.O.Manturov, I.M.Nikonov, {\em On Braid Groups and $G_{n}^{k}$ groups}, {\em J. Knot Theory and Its Ramifications},
2015.


\bibitem{Hierarchy} M.V.Zenkina, The parity hierarchy and new invariants of knots in thickened surfaces,
{\em Journal of Knot Theory and Its Ramifications}, 2013 , Vol. 22, No. 4., 1340001.


\end{thebibliography}
\end{document}